\newtheorem{theorem}{Theorem}
\newtheorem{lemma}{Lemma}
\def\be{\begin{equation}}
\def\ee{\end{equation}}
\def\bal{\begin{aligned}}
\def\eal{\end{aligned}}
\def\bpm{\begin{pmatrix}}
\def\epm{\end{pmatrix}}
\def\lb{\label}
\def\={\;=\;}
\def\+{\,+\,}
\def\-{\,-\,}
\def\to{\longrightarrow}
\def\Co{\mathbb C}
\def\Q{\mathbb Q}
\def\Z{\mathbb Z}
\def\P{\mathbb P}
\def\CC{\mathcal C}
\def\CH{\rm CH}
\def\GL{{\rm GL}}
\def\SL{{\rm SL}}
\def\codim{\rm codim}
\title{Lines crossing a tetrahedron and the Bloch group}
\author{Kevin Hutchinson and  Masha Vlasenko}
\begin{document}
\maketitle

According to B.~Totaro (\cite{Totaro92}), there is a hope that the Chow groups of a field $k$ can be computed using a very small class 
of affine algebraic 
varieties (linear spaces in the right coordinates), whereas the current definition uses essentially all algebraic cycles in affine 
space. In this note we consider a simple modification of  $\CH^2(\rm{Spec} (\emph{k}),3)$ using only linear 
subvarieties in affine spaces and show that it maps  surjectively to the Bloch group $B(k)$ for any infinite field $k$. 
We also describe the kernel of this map.

The second autor is grateful to Anton Mellit, who taught her the idea of passing from linear subspaces to configurations 
(Lemma~1 below) and pointed out the K-theoretical meaning of Menelaus' theorem, and to the organizers of IMPANGA summer school on algebraic geometry for their incredible hospitality and friendly atmosphere.

\section{Lines crossing a tetrahedron}
Let $k$ be an arbitrary infinite field.  Consider the projective spaces $\P^n(k)$ with fixed sets of homogenous 
coordinates $(t_0:t_1:\dots:t_n) \in \P^n(k)$. We call a subspace $L \subset \P^n(k)$ of codimension $r$ \emph{admissible} if
\[
\codim \Bigl(L \cap \{t_{i_1}=\dots=t_{i_s}=0\} \Bigr) \= r+s 
\]    
for every $s$ and distinct $i_1,\dots,i_s$. (Here $\codim (X) > n$ means $X=\emptyset$.) Let 
\[
\CC^r_n \= \Z \Bigl[ \text{admissible}\; L \subset \P^n(k)\,,\; \codim(L) = r \Bigr]
\]
be the free abelian group generated by all admissible subspaces of $\P^n(k)$ of codimension $r$. Then for every $r$ we have a complex
\[
\dots \overset{d}\to \CC^r_{r+2} \overset{d}\to \CC^r_{r+1} \overset{d}\to \CC^r_r \to 0 \to \dots 
\]
(we assume that $\CC^r_n=0$ when $n < r$) with the differential
\be\label{4}
d[L]=\sum (-1)^i [L \cap \{t_i=0\}]
\ee
where every $\{t_i=0\} \subset \P^n(k)$ is naturally identified with $\P^{n-1}(k)$ by throwing away the coordinate $t_i$. We are interested in the homology groups of these complexes $H^r_n = H_n(\CC^r_{\bullet})$. 

For example, one can easily see that $H^1_1 \cong k^*$. Indeed, a hyperplane $\{\sum \alpha_i t_i = 0\}$ is admissible whenever all the coefficients $\alpha_i$ are nonzero, and if we identify
\be\label{1}\bal
& \CC^1_1 \cong \Z[k^*] \qquad \qquad [\{\alpha_0 t_0 + \alpha_1 t_1 = 0\}] \longmapsto \Bigl[\frac{\alpha_1}{\alpha_0}\Bigr]\\
& \CC^1_2 \cong \Z[k^*\times k^*]\qquad [\{ \alpha_0 t_0 + \alpha_1 t_1 + \alpha_2 t_2 = 0 \}] \longmapsto \Bigl[\bigl(\frac{\alpha_1}{\alpha_0},\frac{\alpha_2}{\alpha_1}\bigr)\Bigr]\\
\eal\ee
then the differential $d:\CC^1_2 \to \CC^1_1$ turns into
\[
[(x,y)] \longmapsto [x] - [xy] + [y]\,.
\]
(one can recognize Menelaus' theorem from plane geometry behind this simple computation). Hence we have 
\[
H^1_1 \cong \Z[k^*]\Big/\bigl\{[x] - [xy] + [y] \,:\, x,y \in k^*\bigr\} \cong k^*\,.
\]
Continuing the identifications of ~\eqref{1}, 
$C^1_{\bullet}$ turns into the bar complex for the group $k^*$ (with the term of degree 0 thrown away) and therefore 
\[
H^1_n  = H_n(k^*, \Z)\,,\qquad n \ge 1\,.
\]

Now we switch to $r=2$ and try to compute $H^2_3$. The four hyperplanes $\{t_i=0\}$ form a tetrahedron $\Delta$ in the 3-dimensional projective space $\P^3(k)$ and the line $\ell$ is  admissible if it  
\begin{itemize}
\item[1)] intersects every face of $\Delta$ transversely, i.e. at one point $P_i = \ell \cap \{t_i=0\}$;
\item[2)] doesn't intersect edges $\{t_{i_1}=t_{i_2}=0\}$ of $\Delta$, i.e. all four points $P_0, \dots, P_3 \in \ell$ are different .
\end{itemize}
Therefore it is natural to associate with $\ell$ a number, the cross-ratio of the four points $P_0, \dots, P_3$ on $\ell$. Namely, there is a unique way to identify $\ell$ with $\P^1(k)$ so that $P_0$, $P_1$ and $P_2$ become $0$, $\infty$ and $1$ respectively, and we denote the image of $P_3$ by $\lambda(\ell) \,\in\, \P^1(k) \smallsetminus  \{0,\infty,1\} \= k^* \smallsetminus  \{1\}$. We extend $\lambda$ linearly to a map
\[\bal
\CC^2_3 &\overset{\lambda}\to\; \Z[k^{*}\smallsetminus \{1\}]\\
\sum n_i [\ell_i] &\longmapsto \sum n_i [\lambda(\ell_i)]
\eal\]  

\begin{theorem} Let $\sigma: k^*\otimes k^* \to k^*\otimes k^*$ be the involution $\sigma(x\otimes y)=- y \otimes x$. 
\begin{itemize}
\item[(i)] If $d(\sum n_i [\ell_i])=0$ then $\sum n_i \lambda(\ell_i)\otimes (1-\lambda(\ell_i))=0$ in $(k^*\otimes k^*)_{\sigma}$.
\item[(ii)] Let $L \subset \P^4(k)$ be an admissible plane and $\ell_i = L \cap \{t_i=0\}$, $i=0,\dots,4$. If we denote $x=\lambda(\ell_0)$ and $y=\lambda(\ell_1)$ then
\[
\lambda(\ell_2)\=\frac y x\,,\quad \lambda(\ell_3)\=\frac {1-x^{-1}}{1-y^{-1}}\;\text{ and }\;\lambda(\ell_4)\=\frac {1-x}{1-y}\,.
\]
\item[(iii)] The map induced by $\lambda$ on homology 
\be\label{7}
{\lambda_{*}} : H^2_3 \to B(k)
\ee
is surjective, where 
\[
B(k) \= \frac{{\rm Ker}\Bigl(\;\bal\Z[k^{*}&\smallsetminus\{1\}] \to (k^*\otimes k^*)_{\sigma}\\ [a] &\longmapsto a \otimes (1-a)\eal\;\Bigr)}
{\Big\langle[x]-[y]+\Bigl[\frac y x \Bigr]-\Bigl[\frac {1-x^{-1}}{1-y^{-1}} \Bigr]+\Bigl[\frac {1-x}{1-y} \Bigr]\,,\,x \ne y\Big\rangle} 
\]
is the Bloch group of $k$ (\cite{Suslin90}).

\item[(iv)] We have $H^2_3 \cong H_3(\GL_2(k))/H_3(k^*)$ and the kernel of~\eqref{7}
\[
K = {\rm Ker}\bigl(H^2_3 \overset{\lambda_*} \to B(k)\bigr)
\]
fits into the exact sequence
\be\label{10}
0 \to {\rm Tor}(k^*,k^*)^{\sim} \to K/T(k) \to k^* \otimes K_2(k) \to K_3^M(k)/2 \to 0\,,
\ee
where ${\rm Tor}(k^*,k^*)^{\sim}$ is the unique nontrivial extension of ${\rm Tor}(k^*,k^*)$ by $\Z/2$, and $T(k)$ is a $2$-torsion 
abelian group (conjectured to be trivial). 
\end{itemize}
\end{theorem}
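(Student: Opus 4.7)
For parts (i) and (ii) my plan is direct coordinate computation. For (i), I would extend the identifications~\eqref{1} to a map $\psi:\CC^2_2\to (k^*\otimes k^*)_\sigma$ on admissible points of $\P^2$, a natural candidate being $[(\alpha_0{:}\alpha_1{:}\alpha_2)]\mapsto (\alpha_1/\alpha_0)\otimes(\alpha_2/\alpha_1)$, which is well-defined on projective representatives. I would then parametrize an admissible line with cross-ratio $a$ as $\ell(s{:}t)=(s:t:s-t:s-at)\subset\P^3$, compute the four boundary points $P_i=\ell\cap\{t_i=0\}$ explicitly, apply $\psi$ termwise, and use the defining relation $x\otimes y+y\otimes x=0$ in the $\sigma$-quotient to collapse $(-1)\otimes a+a\otimes(-1)$ to zero, arriving at $\psi(d[\ell])=a\otimes(1-a)$. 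For lines with a different projective embedding the calculation produces extra terms of the form $(-1)\otimes(\cdot)$ that must cancel over any cycle; verifying this cancellation is the one subtle point in (i). Part (ii) is an analogous explicit calculation: parametrize an admissible plane $L\subset\P^4$ with $\lambda(\ell_0)=x$ and $\lambda(\ell_1)=y$, then read off $\lambda(\ell_i)$ for $i=2,3,4$ from the parametrizations of the remaining boundary lines.

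For part (iii), item (i) ensures that $\lambda$ sends cycles of $\CC^2_3$ to the numerator of $B(k)$, and (ii) shows that boundaries from $\CC^2_4$ produce exactly the five-term relation generating the denominator, so $\lambda_*:H^2_3\to B(k)$ is well-defined. Since the chain-level map $\lambda:\CC^2_3\to\Z[k^*\smallsetminus\{1\}]$ is already surjective (every $a\in k^*\smallsetminus\{1\}$ is realized by the explicit line above), surjectivity of $\lambda_*$ reduces to showing that any representative $\sum n_i[a_i]$ of a class in $B(k)$ admits a lift to an actual cycle in $\CC^2_3$. Starting from an arbitrary lift $c=\sum n_i[\ell_i]$, the boundary $d(c)$ lies in the kernel of $\psi$; one must then exhibit a sufficient supply of admissible lines with trivial cross-ratio class whose boundaries generate $\ker\psi$, so that $c$ can be corrected by a chain in $\ker\lambda$ to an honest cycle without altering its $B(k)$-class.

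Part (iv) is the main content of the theorem. The strategy is to reinterpret $\CC^2_\bullet$ as a configuration complex in the spirit of Mellit's Lemma~1 from the introduction. An admissible codimension-$2$ subspace $L\subset\P^n$ is equivalent to a configuration of $n+1$ hyperplanes in general position in $L\cong\P^{n-2}$ together with a choice of defining linear forms modulo common rescaling; this identifies $\CC^2_\bullet$ with a variant of Suslin's configuration complex for $\GL_2(k)$, whose homology is $H_n(\GL_2(k),\Z)/H_n(k^*,\Z)$. Specializing to $n=3$ gives the first assertion of (iv). To extract the exact sequence~\eqref{10} I would combine this isomorphism with Suslin's fundamental exact sequence \cite{Suslin90} relating $H_3(\SL_2(k))$ and $B(k)$ with kernel $\mathrm{Tor}(k^*,k^*)^{\sim}$, the Hochschild--Serre spectral sequence for $\SL_2(k)\subset\GL_2(k)$ with quotient $k^*$, Matsumoto's identification $H_2(\SL_2(k))\cong K_2(k)$, and the $\sigma$-antisymmetrization producing $K^M_3(k)/2$ as the cokernel. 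The main obstacle I foresee is the bookkeeping in this spectral-sequence comparison, in particular the identification of the conjecturally-trivial $2$-torsion remainder $T(k)$ that records the residual discrepancy between the configuration complex and the true Suslin complex in low degree.
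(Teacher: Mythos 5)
The weak point is the surjectivity claim in (iii). Well-definedness of $\lambda_*$ does follow from (i) and (ii) as you say, but your surjectivity argument hinges on an unproved correction step: given an arbitrary lift $c\in\CC^2_3$ of a Bloch-group representative, you must replace $c$ by a genuine cycle without changing its class in $B(k)$, and for that you would have to show that every element of $d(\CC^2_3)\cap\ker\psi$ is the boundary of a chain whose cross-ratios lie in the five-term subgroup. You give no construction of the required ``supply of admissible lines,'' and it is not at all clear how to produce one directly. The paper sidesteps this entirely: via Lemma~1 and the hypercohomology spectral sequence of the acyclic configuration complex $C(2,\bullet)$ it exhibits $H^2_3$ as a quotient of $H_3(\GL_2(k))$, and Suslin's parallel computation with the complex of tuples of distinct points of $\P^1(k)$ exhibits $B(k)$ as a quotient of the same group; the natural chain map from configurations of vectors to configurations of points yields a commutative triangle in which both maps out of $H_3(\GL_2(k))$ are surjective, so the induced map $H^2_3\to B(k)$ is surjective for free, and the only computation left is to identify that induced map with $\lambda_*$ (done by putting an admissible line in normal form). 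So in the paper surjectivity in (iii) is a corollary of the group-homology machinery you reserve for (iv), not something proved by lifting cycles.

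Two smaller points. In (i), your candidate $\psi([\alpha_0{:}\alpha_1{:}\alpha_2])=(\alpha_1/\alpha_0)\otimes(\alpha_2/\alpha_1)$ does not make the square commute termwise; the paper uses $[t_0{:}t_1{:}t_2]\mapsto t_0\otimes(-t_1)+(-t_1)\otimes t_2+t_2\otimes t_0+t_0\otimes t_0$, whose extra terms are exactly what remove the stray $(-1)\otimes(\cdot)$ contributions you hope will ``cancel over any cycle'' --- with the right $\psi$ there is nothing left to cancel and (i) is immediate from $\psi(dc)=0$. In (iv), your route to $H^2_3\cong H_3(\GL_2(k))/H_3(k^*)$ agrees with the paper, but $H_2(\SL_2(k))$ is not $K_2(k)$ (Matsumoto's theorem is a stable statement); what the argument actually needs is the Hutchinson--Tao exact sequence $0\to H_1(k^*,H_2(\SL_2(k)))\to k^*\otimes K_2^M(k)\to K_3^M(k)/2\to 0$, spliced by the snake lemma with the Hochschild--Serre sequence for $\SL_2(k)\subset\GL_2(k)$ and with Suslin's sequence~\eqref{9}. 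Also $T(k)$ is the kernel of $H_0(k^*,H_3(\SL_2(k),\Z))\to K_3^{\rm ind}(k)$, a question about stabilization of the third homology of $\SL_2$, not a discrepancy between configuration complexes.
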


We remark that ${\rm Tor}(k^*,k^*)={\rm Tor}(\mu(k),\mu(k))$ is a finite abelian group if $k$ is a finitely-generated field. 
Furthermore, it is proved in~\cite{Suslin90} that $B(k)$ has the following relation to $K_3(k)$: 
let $K_3^\mathrm{\small ind}(k)$ be the cokernel of the map from Milnor's K-theory $K^M_3(k) \to K_3(k)$, then there is an exact sequence
\be\label{9}
0 \to {\rm Tor}(k^*,k^*)^{\sim} \to K_3^\mathrm{\small ind}(k) \to B(k) \to 0
\ee
In particular, if $k$ is a number field then as a consequence of~\eqref{9} and Borel's theorem (\cite{Borel79}) we have
\[
\dim B(k)\otimes \Q \= r_2\,,
\]
where $r_2$ is the number of pairs of complex conjugate embeddings of $k$ into $\Co$.

\begin{proof}[Proof of~(i) and~(ii).]
One can check that the diagram
\[\xymatrix{
\CC^2_3 
\ar[d]^{\lambda}\ar[rr]^{d}&&
\CC^2_2\ar[d]^{[t_0:t_1:t_2]\longmapsto t_0\otimes (-t_1) \+ (-t_1)\otimes t_2 \+ t_2\otimes t_0 \+ t_0\otimes t_0}\\
\Z[k^{*}\smallsetminus\{1\}]\ar[rr]^{\quad [a]\longmapsto a\otimes(1-a) \quad}&&
 (k^*\otimes k^*)_{\sigma}}
\]
is commutative, and therefore~(i) follows. It is another tedious computation to check~(ii). 
\end{proof}

In the next section we will prove the remaining claims~(iii) and~(iv) and also show that 
\be\label{6}
H^2_n \cong H_n(\GL_2(k),\Z) / H_n(k^*,\Z) \qquad n \ge 3\,.
\ee

\section{Complexes of configurations}

We say that $n+1$ vectors $v_0,\dots,v_n \in k^r$ are \emph{in general position} if every $\le r$ of them are linearly independent. Let $C(r,n)$ be the free abelian group generated by $(n+1)$-tuples of vectors in $k^r$ in general position. For fixed $r$ we have a complex 
\[
\dots  \overset{d}\to  C(r,2)  \overset{d}\to C(r,1)  \overset{d}\to C(r,0) 
\]
with the differential
\be\label{2}
d[v_0,\dots,v_n]=\sum (-1)^i [v_0,\dots,\check{v}_i,\dots,v_n]
\ee
The augmented complex $C(r,\bullet) \to \Z \to 0$ is acyclic. Indeed, if 
\[d \Bigl(\sum n_i [v^i_0,\dots,v^i_n] \Bigr) = 0
\] and $v \in k^r$ is such that all $(n+2)$-tuples $[v,v^i_0,\dots,v^i_n]$ are in general position (such vectors $v$ exist since $k$ is infinite) then
\[
\sum n_i [v^i_0,\dots,v^i_n] \= d \Bigl(\sum n_i [v,v^i_0,\dots,v^i_n] \Bigr)\,.
\]

\begin{lemma}\label{3} $\CC^r_n \cong C(r,n)_{\GL_r(k)}$ for the diagonal action of $\GL_r(k)$ on tuples of vectors. Moreover,  the complex $\CC^r_{\bullet}$ is isomorphic to the truncated complex $C(r,\bullet)_{\GL_r(k),\bullet \ge r}$.
\end{lemma}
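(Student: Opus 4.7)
The plan is to set up the dictionary ``codimension-$r$ linear subspace of $\P^n(k)\leftrightarrow$ representing matrix'' and then read off everything in matrix language. A codimension-$r$ subspace $L\subset\P^n(k)$ is the projectivisation of the kernel of some surjection $A\colon k^{n+1}\to k^r$, and two such surjections define the same $L$ iff they differ by left multiplication by an element of $\GL_r(k)$. Writing $A$ as an $r\times(n+1)$ matrix with columns $v_0,\dots,v_n\in k^r$, this identifies the set of codimension-$r$ subspaces of $\P^n(k)$ with the set of $\GL_r(k)$-orbits of tuples $(v_0,\dots,v_n)\in(k^r)^{n+1}$ spanning $k^r$, under the diagonal action.

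Next I would check that admissibility of $L$ corresponds precisely to general position of the tuple $(v_0,\dots,v_n)$. For $V=\{t_{i_1}=\cdots=t_{i_s}=0\}\subset k^{n+1}$, the intersection $L\cap\P(V)$ is the projectivisation of $\ker(A|_V)$, whose matrix has columns $\{v_j : j\notin\{i_1,\dots,i_s\}\}$. A short dimension count shows that the codimension-$(r+s)$ requirement amounts to: these $n+1-s$ remaining vectors span $k^r$ when $s\le n-r$, and are linearly independent (so that the intersection is empty) when $s=n+1-r$. The former follows from the latter, and the latter is precisely the general-position requirement that every $r$-subset of $\{v_0,\dots,v_n\}$ be linearly independent (which in turn forces the spanning condition once $n\ge r$). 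Hence admissibility matches general position, and the bijection on generators gives $\CC^r_n\cong C(r,n)_{\GL_r(k)}$ for $n\ge r$. For $n<r$ the group $\CC^r_n$ is zero by the codimension convention, whereas $C(r,n)_{\GL_r(k)}$ is typically nonzero, which is exactly why only the truncation $\bullet\ge r$ appears in the second part of the lemma.

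For the compatibility of differentials, identifying $\{t_i=0\}\subset\P^n$ with $\P^{n-1}$ by dropping the $i$-th coordinate sends $L\cap\{t_i=0\}$ to the subspace represented by the matrix $(v_0|\cdots|\check v_i|\cdots|v_n)$, so the alternating-sum boundary on $\CC^r_\bullet$ matches the one on $C(r,\bullet)$. Both constructions are manifestly $\GL_r(k)$-equivariant, so the identification descends to coinvariants and commutes with $d$. The entire argument is standard linear algebra; the one step requiring any care is the case analysis that collapses the family of admissibility conditions (one for each $s$) into the single clean statement ``every $r$-subset of the columns is linearly independent''. Once this reduction is in hand, the rest of the proof is purely formal bookkeeping.
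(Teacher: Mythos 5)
Your proof is correct and follows essentially the same route as the paper: the same bijection between codimension-$r$ subspaces of $\P^n(k)$ and $\GL_r(k)$-orbits of spanning $(n+1)$-tuples in $k^r$ (kernel of a surjection $k^{n+1}\to k^r$, unique up to the left $\GL_r(k)$-action), the same identification of admissibility with general position, and the same observation that the boundary maps agree. The only cosmetic difference is that you establish admissibility $\Leftrightarrow$ general position by a direct dimension count over all $s$ at once, whereas the paper treats the base case $n=r$ and then inducts on $n$ via the fact that $L$ is admissible iff every $L\cap\{t_i=0\}$ is admissible in $\P^{n-1}(k)$; both arguments cover the same ground.
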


\begin{proof}
For $n\ge r$ there is a bijective correspondence between subspaces of codimension $r$ in $\P^n(k)$ and $\GL_r(k)$-orbits on $(n+1)$-tuples $[v_0,\dots,v_n]$ of vectors in $k^r$ satisfying the condition that $v_i$ span $k^r$. It is given by
\[\bal
L \subset \P^n \;&\longmapsto\; [v_0,\dots,v_n]\,,\, v_i = \text{image of $e_i$ in } {k^{n+1}}/\widetilde{L} \cong k^r\\
[v_0,\dots,v_n]  \;&\longmapsto\; \widetilde{L} = {\rm Ker}[v_0,\dots,v_n]^T \subset k^{n+1}
\eal\]
where $\widetilde{L}$ is the unique lift of $L$ to a linear subspace in $k^{n+1}$ and $e_0,\dots,e_n$ is a standard basis in $k^{n+1}$. 

An admissible point in $\P^r(k)$ is a point which doesn't belong to any of the $r+1$ hyperplanes $\{t_i = 0\}$, and for the corresponding vectors $[v_0,\dots,v_r]$ it means that every $r$ of them are linearly independent. For $n>r$ a subspace $L$ of codimension $r$ in $\P^n(k)$ is admissible whenever all the intersections $L \cap \{t_i=0\}$ are admissible in $\P^{n-1}(k)$. Hence it follows by induction that admissible subspaces correspond exactly to $\GL_r(k)$-orbits of tuples ``in general position''. Obviously, differential~\eqref{4} is precisely~\eqref{2} for tuples.
\end{proof}

The tuples of vectors in general position in $k^r$ modulo the diagonal action of $\GL_r(k)$ are called \emph{configurations}, so $C(r,n)_{\GL_r(k)}$  is the free abelian group generated by configurations of $n+1$ vectors in $k^r$.

\begin{proof}[Proof of~(iii) in Theorem 1.] For brevity we denote $C(2,n)$ by $C_n$ and $\GL_2(k)$ by $G$. Since the complex of $G$-modules $C_{\bullet}$ is quasi-isomorphic to $\Z$ we have the hypercohomology spectral sequence with $E^1_{pq}=H_q(G,C_p) \Rightarrow H_{p+q}(G,\Z)$. Since all modules $C_p$ with $p>0$ are free we have $E^1_{pq}=0$ for $p,q>0$ and $E^1_{p0}=(C_p)_G$. If $G_1 \subset G$ is the stabilizer of $\binom10$ then $E^1_{0q} = H_q(G,\Z[G/G_1]) = H_q(G_1,\Z)$ by Shapiro's lemma. We have $k^* \subset G_1$ and $H_q(k^*,\Z)=H_q(G_1,\Z)$ (see Section~1 in~\cite{Suslin}), so $E^1_{0q}=H_q(k^*,\Z)$. Further, $E^2_{p0}=H_p((C_{\bullet})_G)$ and $E^2_{0q}=H_q(k^*,\Z)$. This spectral sequence degenerates on the second term. Indeed, the embedding 
\[\bal
k^* &\hookrightarrow G \\
\alpha &\mapsto \bpm1&0\\0&\alpha\epm
\eal\] 
is split by determinant, and therefore all maps $H_q(k^*,\Z)\to H_q(G,\Z)$ are injective. Consequently, $E^{\infty}_{pq}=E^2_{pq}$ and for every $n \ge 2$ we have a short exact sequence
\[
0 \to H_n(k^*,\Z)\to H_n(G,\Z) \to H_n\bigl((C_{\bullet})_G \bigr)\to 0\,.
\]
It follows from Lemma~\ref{3} that
\[
H^2_n \=  H_n\bigl((C_{\bullet})_G\bigr) \= H_n(G,\Z) / H_n(k^*,\Z)\,, \qquad n \ge 3\,.
\]

Let $D_n$ be the free abelian group generated by  $(n+1)$-tuples of distinct points in $\P^1(k)$. Again we have the differential like~\eqref{2} on $D_{\bullet}$ and the augmented complex $D_{\bullet}\to\Z\to 0$ is acyclic. We have a surjective map from $C_{\bullet}$ to $D_{\bullet}$ since a non-zero vector in $k^2$ defines a point in $P^1(k)$ and the group action agrees. The spectral sequence $\widetilde E^1_{pq}=H_q(G,D_p) \Rightarrow H_{p+q}(G,\Z)$ was considered in~\cite{Suslin90}.  In particular, $\widetilde E^1_{p0} \= (D_p)_G$ is the free abelian group generated by $(p-2)$-tuples of different points since $G$-orbit of every $(p+1)$-tuple contains a unique element of the form $(0,\infty,1,x_1,\dots,x_{p-2})$, and the differential $d^1: \widetilde E^1_{04} \to \widetilde E^1_{03}$ is given by
\be\label{5}
 [x,y] \mapsto [x]-[y]+\Bigl[\frac y x \Bigr]-\Bigl[\frac {1-x^{-1}}{1-y^{-1}} \Bigr]+\Bigl[\frac {1-x}{1-y} \Bigr]\,.
\ee
According to~\cite{Suslin90}, terms $\widetilde E^2_{pq}$ with small indices are
\[\xymatrix{
H_3(k^*\oplus k^*)&&&\\
H_2(k^*)\oplus(k^*\otimes k^*)_{\sigma} &(k^*\otimes k^*)^{\sigma}&&\\
k^*&0&0&\\
\Z&0&0&\mathfrak{p}(k)}\]
where $\mathfrak{p}(k)$ is the quotient of $\Z[k^* \smallsetminus \{ 1 \}]$ by all 5-term relations as in right-hand side of~\eqref{5}, and the only non-trivial differential starting from $\mathfrak{p}(k)$ is 
\[\bal
d^3:\mathfrak{p}(k) &\to H_2(k^*)\oplus(k^*\otimes k^*)_{\sigma} =\Lambda^2 (k^*)\oplus(k^*\otimes k^*)_{\sigma}\\
[x]\;&\mapsto \; x \wedge (1-x) \- x \otimes(1-x)
\eal\]
Therefore $\widetilde E^4_{30}=\widetilde E^{\infty}_{30}=B(k)$ and we have a commutative triangle
\[\xymatrix{
H_3(G) \ar@{>>}[dr] \ar@{>>}[r] & E^{\infty}_{30}=H^2_3 \ar[d]\\
& \; \widetilde E^{\infty}_{30} = B(k)
}\]
where both maps from $H_3(G)$ are surjective, hence the vertical arrow is also surjective. It remains to check that the vertical arrow coincides with $\lambda_*$. A line $\ell$ in $\P^3(k)$ is given by two linear equations and for an admissible line it is always possible to chose them in the form
\[\begin{cases}
t_0 \phantom{+ t_1} + x_1 t_2 + x_2 t_3 \= 0\,,\\
\phantom{t_0 +}  t_1 + y_1 t_2 + y_2 t_3 \= 0\,.
\end{cases}\]
This line corresponds to the tuple of vectors
\[
\binom 10\,,\; \binom 01\,,\; \binom{x_1}{y_1}\,,\; \binom{x_2}{y_2} 
\]
which can be mapped to the points $0,\infty,1, \frac{x_1 y_2}{y_1 x_2}$ in $\P^1(k)$, hence the vertical arrow maps it to $[\frac{x_1 y_2}{y_1 x_2}]$ (actually we need to consider a linear combination of lines which vanishes under $d$ but for every line the result is given by this expression). On the other hand, four points of its intersection with the hyperplanes are
\[\bal
&P_0 \= (0: y_1x_2-y_2x_1:-x_2:x_1)\\
&P_1 \= (y_2x_1-y_1x_2:0:-y_2:y_1)\\
&P_2 \= (-x_2:-y_2:0:1)\\
&P_3 \= (-x_1:-y_1:1:0)\\
\eal\]
and if we represent every point on $\ell$ as $\alpha P_0 + \beta P_1$ then the corresponding ratios $\frac{\beta}{\alpha}$ will be $0,\infty,-\frac{x_2}{y_2}, -\frac{x_1}{y_1}$.  Hence $\lambda(\ell)=\frac{x_1 y_2}{y_1 x_2}$ again and~(iii) follows.

To prove~(iv) we first observe that the Hochschild-Serre spectral sequence associated to
\[
1 \to \SL_2(k) \to \GL_2(k) \overset{\det}\to k^* \to 1
\]
gives a short exact sequence
\be\lb{8}\bal
1 \to H_0\Bigl(k^*,H_3(\SL_2(k),\Z) \Bigr) \to {\rm Ker}&\Bigl(H_3(\GL_2(k),\Z)\overset{\det}\to H_3(k^*,\Z)\Bigr) \\
&\to H_1\Bigl(k^*, H_2(\SL_2(k),\Z) \Bigr)\to 1.
\eal\ee

The first term here maps surjectively to $K_3^\mathrm{\small ind}(k)$ (see the last section of \cite{HT}), 
and the map is conjectured by Suslin to be an isomorphism (see Sah \cite{sah:discrete3}). 
It is known that its kernel is at worst 2-torsion (see Mirzaii \cite{mirzaii:third}).

Thus we let 
\[
T(k):={\rm Ker}\left(H_0(k^*,H_3(\SL_2(k),\Z) )\to K_3^\mathrm{\small ind}(k)\right).
\]
By the preceeding remarks, this is a $2$-torsion abelian group. 
Since the embedding $k^* \to \GL_2(k)$ is split by the determinant, the middle term in~\eqref{8} is isomorphic to $H^2_3$. 
Then applying the snake lemma to 
the diagram
\begin{eqnarray*}
\xymatrix{
0\ar[r]
&T(k)\ar[r]\ar@{^(->}[d]
&H_0\Bigl(k^*,H_3(\SL_2(k),\Z) \Bigr)\ar[r]\ar@{^(->}[d]
&K_3^\mathrm{\small ind}(k)\ar[r]\ar@{>>}[d]
&0\\
0\ar[r]
&
K\ar[r]
&
H^3_2\ar[r]
&
B(k)\ar[r]
&
0}
\end{eqnarray*}
% $K_3^{ind}(k) \to H^2_3$ (from~\eqref{8}), $K_3^{ind}(k)\to B(k)$ (from~\eqref{9}) and $H^2_3 \overset{\lambda_*} \to B(k)$ 
gives the short exact sequence
\[
0\to{\rm Tor}(k^*,k^*)^{\sim} \to  K/T(k)  \to H_1\Bigl(k^*,H_2(\SL_2(k),\Z)\Bigr) \to 0.
\]
Finally, it  follows from~\cite{HT} that there is a natural short exact sequence
\[
0 \to H_1\Bigl(k^*,H_2(\SL_2(k),\Z)\Bigr) \to k^* \otimes K_2^M(k) \to K_3^M(k)/2 \to 0\,.
\]
This proves~\eqref{10}.
\end{proof}

\end{document}